\numberwithin{equation}{section}
                        \theoremstyle{plain}
\newcommand\no[1]{}
\newtheorem{theorem}{Theorem}[section]
\newtheorem{thm}{Theorem}
\newtheorem{lemma}[theorem]{Lemma}
\newtheorem{proposition}[theorem]{Proposition}
\theoremstyle{definition}
\newtheorem{remark}[theorem]{Remark}
\newtheorem{example}[theorem]{Example}
\newtheorem{definition}[theorem]{Definition}
\def\BC{\mathbb C}
\def\BZ{\mathbb Z}
\def\la{\langle}
\def\ra{\rangle}
\DeclareMathOperator{\tr}{\mathrm tr}
\def\be { \begin{equation} }
\def\ee { \end{equation} }
\begin{document}

\title[Reidemeister torsion and Dehn surgery]{Reidemeister torsion and Dehn surgery on twist knots}

\author[Anh T. Tran]{Anh T. Tran}
\address{Department of Mathematical Sciences, University of Texas at Dallas, Richardson, TX 75080, USA}
\email{att140830@utdallas.edu}

\begin{abstract}
We compute the Reidemeister torsion of the complement of a twist knot in $S^3$ and that of the 3-manifold obtained by a Dehn surgery on a twist knot.
\end{abstract}

\thanks{2010 {\em Mathematics Classification:} Primary 57N10. Secondary 57M25.\\
{\em Key words and phrases: Dehn surgery, nonabelian representation, Reidemeister torsion, twist knot.}}

\maketitle

\section{Main results}

In a recent paper Kitano \cite{Ki2015} gives a formula for the Reidemeister torsion of the 3-manifold obtained by a Dehn surgery on the figure eight knot. In this paper we generalize his result to all twist knots. Specifically, we will compute the Reidemeister torsion of the complement of a twist knot in $S^3$ and that of the 3-manifold obtained by a Dehn surgery on a twist knot. 

Let $J(k,l)$ be the link in Figure 1, where $k,l$ denote 
the numbers of half twists in the boxes. Positive (resp. negative) numbers correspond 
to right-handed (resp. left-handed) twists. 
Note that $J(k,l)$ is a knot if and only if $kl$ is even. The knot $J(2,2n)$, where $n \not= 0$, is known as a twist knot. For more information on $J(k,l)$, see \cite{HS}.

\begin{figure}[th]
\centerline{\psfig{file=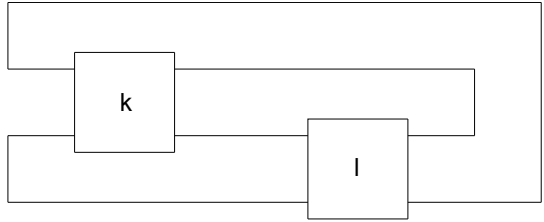,width=3.5in}}
\vspace*{8pt}
\caption{The link $J(k,l)$. }
\end{figure} 

In this paper we fix $K=J(2,2n)$. Let $E_K$ be the complement of $K$ in $S^3$. The fundamental group of $E_K$ has a presentation $\pi_1(E_K)= \la a, b \mid w^na=bw^n \ra$ where $a,b$ are meridians and $w=ba^{-1}b^{-1}a$. A representation $\rho:  \pi_1(E_K) \to SL_2(\BC)$ is called nonabelian if 
the image of $\rho$ is a nonabelian subgroup of $SL_2(\BC)$. Suppose $\rho: \pi_1(E_K) \to SL_2(\BC)$ is a nonabelian representation. Up to conjugation, we may assume that $$\rho(a) = \left[ \begin{array}{cc}
s & 1 \\
0 & s^{-1} \end{array} \right] \quad \text{and} \quad \rho(b) = \left[ \begin{array}{cc}
s & 0 \\
-u & s^{-1} \end{array} \right]$$ where $(s,u) \in (\BC^*)^2$ is a root of the Riley polynomial $\phi_K(s,u)$, see \cite{Ri}.

Let $x :=\tr \rho(a)=s + s^{-1}$ and $z:= \tr \rho(w)=u^2-(x^2-4)u+2$. Let $S_k(z)$ be the Chebychev polynomials of the second kind defined by $S_0(z)=1$, $S_1(z)=z$ and $S_{k}(z) = z S_{k-1}(z) - S_{k-2}(z)$ for all integers $k$. 

\begin{thm}
\label{main1} 
Suppose $\rho: \pi_1(E_K) \to SL_2(\BC)$ is a nonabelian representation. If $x \not= 2$ then the Reidemeister torsion of $E_K$ is given by $$\tau_{\rho}(E_K)=
( 2-x) \frac{S_n(z) - S_{n-2}(z) - 2}{z-2} + x S_{n-1}(z).$$
\end{thm}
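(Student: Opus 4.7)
The plan is to apply Fox calculus to the one-relator presentation $\pi_1(E_K) = \la a, b \mid r\ra$ with $r = w^n a w^{-n} b^{-1}$. By the standard formula for the Reidemeister torsion of a two-generator one-relator knot group (used by Wada, and by Kitano in the figure-eight case),
\[
\tau_\rho(E_K) \;=\; \frac{\det \Phi(\partial r/\partial b)}{\det(\rho(a) - I)},
\]
where $\Phi$ denotes the linear extension of $\rho$ to the group ring. From the matrix form of $\rho(a)$ the denominator equals $(s-1)(s^{-1}-1) = 2 - x$, so the hypothesis $x \neq 2$ is exactly what makes this nonzero.

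For the numerator, the Leibniz rule for Fox derivatives gives
\[
\frac{\partial r}{\partial b} \;=\; (1 - w^n a w^{-n})\,\frac{\partial w^n}{\partial b} \;-\; w^n a w^{-n} b^{-1}.
\]
The defining relation $w^n a = b w^n$ yields $\Phi(w^n a w^{-n}) = \rho(b)$, so after applying $\Phi$ this collapses to $\bigl(I - \rho(b)\bigr)\,\Phi(\partial w^n/\partial b) - I$. Next, expand $\partial w^n/\partial b = (1 + w + \cdots + w^{n-1})\,\partial w/\partial b$ and compute $\partial w/\partial b = 1 - ba^{-1}b^{-1}$ directly from $w = ba^{-1}b^{-1}a$, reducing the problem to evaluating the geometric series $\sum_{k=0}^{n-1} \rho(w)^k$ and multiplying the resulting matrices. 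Since $\rho(w) \in SL_2(\BC)$ has trace $z$, the Cayley--Hamilton identity $\rho(w)^k = S_{k-1}(z)\,\rho(w) - S_{k-2}(z)\,I$ reduces the series to a linear combination of $\rho(w)$ and $I$ whose coefficients are telescoping sums of Chebyshev polynomials; the identity $(z-2)\sum_{k=0}^{n-1} S_k(z) = S_n(z) - S_{n-1}(z) - 1$ and its index-shifted variant collapse these sums, producing the quotient $(S_n(z) - S_{n-2}(z) - 2)/(z-2)$ that appears in the theorem.

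What remains is to substitute the explicit $2\times 2$ matrices for $\rho(a)$ and $\rho(b)$ into $\bigl(I - \rho(b)\bigr)\Phi(\partial w^n/\partial b) - I$, take the $2\times 2$ determinant, and simplify. This is where the main obstacle lies: the raw result is a rational expression in $s, u$ that one must rewrite as a polynomial in $x = s + s^{-1}$ and $z = u^2 - (x^2-4)u + 2$, with the final form matching the specific combination $(2-x)(S_n - S_{n-2} - 2)/(z-2) + x\,S_{n-1}$ after division by $2-x$. No conceptual difficulty is expected beyond the nonabelian condition $x \neq 2$; the challenge is the delicate bookkeeping needed to track how the Chebyshev sums, the $u$-dependence of $\rho(b)$, and the factor $\rho(b)\rho(a)^{-1}\rho(b)^{-1}$ combine to give precisely the stated closed form.
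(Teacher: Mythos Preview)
Your plan is essentially the paper's: Johnson's Fox--calculus formula, the geometric series $\sum_{k=0}^{n-1}\rho(w)^k$ expressed through Chebyshev polynomials, then algebraic simplification. The paper differentiates with respect to $a$ and divides by $\det(\rho(b)-I)$ rather than your choice of $\partial r/\partial b$ over $\det(\rho(a)-I)$, but both denominators equal $2-x$, so this is cosmetic. One organizational refinement worth borrowing: the paper writes the numerator in the form $\det(I+\Omega)=1+\tr\Omega+\det\Omega$, which cleanly separates the determinant of the Chebyshev sum (giving the $(S_n-S_{n-2}-2)/(z-2)$ factor) from a single trace computation.

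The one ingredient your outline does not flag is the Riley equation $\phi_K(s,u)=0$ itself. In the paper the raw trace comes out as $(2x-4-u)S_{n-1}(z)+S_{n-1}(z)-S_{n-2}(z)-1$, and it is only after using $S_n(z)=\bigl(u^2-(u+1)(s^2+s^{-2}-3)\bigr)S_{n-1}(z)$ to rewrite $S_{n-2}(z)=zS_{n-1}(z)-S_n(z)=(s^2+s^{-2}-1-u)S_{n-1}(z)$ that the stray $u$--terms cancel and the expression collapses to $x(2-x)S_{n-1}(z)-1$. Expect your ``delicate bookkeeping'' not to close up until you invoke that relation; without it the determinant will not reduce to a function of $x$ and $z$ alone.
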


Now let $M$ be the 3-manifold obtained by a $\frac{p}{q}$-surgery on the twist knot $K$. The fundamental group $\pi_1(M)$ has a presentation
$$\pi_1(M) = \la a, b \mid w^n a = bw^n, a^p\lambda^q=1 \ra,$$
where $\lambda$ is the canonical longitude corresponding to the meridian $\mu=a$.

\begin{thm} 
\label{main2}
Suppose $\rho: \pi_1(E_K) \to SL_2(\BC)$ is a nonabelian representation which extends to a representation $\rho: \pi_1(M) \to SL_2(\BC)$. If $x \not\in \{0,2\}$ then the Reidemeister torsion of $M$ is given by   
$$\tau_{\rho}(M)=
\left( ( x-2) \frac{S_n(z) - S_{n-2}(z) - 2}{z-2} - x S_{n-1}(z) \right) \left( u^{-2}(u+1)(x^2-4) - 1 \right)  x^{-2}.$$
\end{thm}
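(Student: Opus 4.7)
The plan is to apply Fox calculus to the balanced two-generator, two-relator presentation $\pi_1(M) = \la a, b \mid r_1, r_2 \ra$, where $r_1 := w^n a w^{-n} b^{-1}$ is the knot relation (the same relation appearing in Theorem~\ref{main1}) and $r_2 := a^p \lambda^q$ is the surgery relation. Because $x \neq 2$, the matrix $\rho(a) - I$ is invertible with $\det(\rho(a)-I) = (s-1)(s^{-1}-1) = 2-x$, so the standard Fox-calculus formula for the Reidemeister torsion of a closed 3-manifold presents $\tau_\rho(M)$ as a quotient of a determinant built from the Fox Jacobian $(\partial r_i / \partial x_j)$, evaluated under $\rho$, by $\det(\rho(a)-I) = 2-x$.

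A direct comparison with Theorem~\ref{main1} shows that the first parenthesized factor of the claimed expression for $\tau_\rho(M)$ equals $-\tau_\rho(E_K)$. This strongly suggests the Fox determinant will split into two contributions: the row of the Jacobian coming from $r_1$ should reproduce (up to a sign) the Theorem~\ref{main1} computation of $\tau_\rho(E_K)$, while the row coming from $r_2$ should produce the ``surgery factor'' $u^{-2}(u+1)(x^2-4) - 1$ together with the denominator $x^{-2}$. The task therefore reduces to computing the $r_2$-row contribution explicitly and verifying that the two pieces multiply to give the stated answer.

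To do so, I would first write the canonical null-homologous longitude $\lambda$ as an explicit word in $a, b$; for the twist knot $J(2,2n)$ it has the form $(w \widetilde{w})^n$ corrected by a power of $a$, where $\widetilde{w}$ denotes the reverse of $w = b a^{-1} b^{-1} a$. Applying the product rule for Fox derivatives to $r_2 = a^p \lambda^q$ and then evaluating under $\rho$, one uses that $\lambda$ commutes with $a$, so $\rho(\lambda)$ is upper triangular in the chosen basis, with eigenvalues $\ell, \ell^{-1}$ constrained by the surgery identity $\rho(a^p \lambda^q) = I$ (equivalently $s^p \ell^q = 1$). The off-diagonal entry of $\rho(\lambda)$, after simplification using the Riley equation $\phi_K(s,u) = 0$ and the identities $x = s + s^{-1}$ and $z = u^2 - (x^2-4) u + 2$, should yield precisely the claimed factor $u^{-2}(u+1)(x^2-4) - 1$.

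The main obstacle will be the explicit computation of $\rho(\lambda)$ and of the associated Fox derivative: since $\lambda$ is a word in $a, b$ whose length grows linearly in $n$, the relevant Fox derivatives are long sums. The key technical ingredient should be an inductive identity expressing $\rho(w^k)$ and its partial-derivative contributions in terms of the Chebyshev polynomials $S_k(z)$, compatible with (and presumably reused from) the proof of Theorem~\ref{main1}; this should let the long products telescope into the stated closed form. The hypothesis $x \notin \{0, 2\}$ is used at two distinct places: $x \neq 2$ to invert $\rho(a)-I$ in the torsion denominator, and $x \neq 0$ to divide by $x^2$ at the final stage when isolating the surgery factor.
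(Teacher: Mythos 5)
Your plan diverges from the paper's proof in a way that introduces genuine gaps. The paper does not run Fox calculus on the closed manifold at all: it invokes the surgery (Mayer--Vietoris) formula $\tau_{\rho}(M) = \tau_{\rho}(E_K)\big/\bigl(2-\tr\rho(\lambda)\bigr)$ from Kitano, valid when $\tr\rho(\lambda)\neq 2$, and then the whole content of the theorem reduces to Theorem \ref{main1} plus the identity $\tr\rho(\lambda)-2 = x^2\big/\bigl(u^{-2}(u+1)(x^2-4)-1\bigr)$ (Proposition \ref{longitude}). That identity is where the ``surgery factor'' actually comes from, and its proof is nontrivial: one computes $\rho(\overleftarrow{w}^n)$ and $\rho(w^n)$ via the Chebyshev formula \eqref{power}, reduces $\tr\rho(\lambda)$ to an expression in $S_{n-1}^2(z)$, and then eliminates $S_{n-1}^2(z)$ using the Riley equation together with $S_n^2 - zS_nS_{n-1} + S_{n-1}^2 = 1$ (Lemma \ref{S^2}). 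None of this computation appears in your proposal; you assert that ``the off-diagonal entry of $\rho(\lambda)$ \dots should yield precisely the claimed factor,'' which is both unverified and misdirected --- the factor arises from the \emph{trace} of $\lambda$, not an off-diagonal entry. (Also, the canonical longitude is $\overleftarrow{w}^n w^n$, not $(w\overleftarrow{w})^n$; these are different words.)

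The more structural problem is your appeal to a ``standard Fox-calculus formula for the Reidemeister torsion of a closed 3-manifold.'' Theorem \ref{Johnson} applies to a knot exterior, whose Wirtinger presentation 2-complex is a deformation retract of the exterior; for the closed manifold $M$ the presentation 2-complex of $\la a,b\mid r_1,r_2\ra$ is not homotopy equivalent to $M$ (there is a 3-cell to account for), so the torsion of $M$ is not the determinant of the $4\times 4$ Fox Jacobian divided by $\det(\rho(a)-I)$. Setting up the correct chain complex for the closed manifold is precisely what the cited surgery formula packages for you, and your proposal neither proves nor correctly cites a substitute. Finally, even granting some determinant formula, your claim that the $4\times 4$ determinant ``splits'' into the product of an $r_1$-row contribution equal to $-\tau_\rho(E_K)$ and an $r_2$-row contribution equal to the surgery factor is asserted, not argued; block determinants do not factor row-by-row in general. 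As written, the two multiplicative factors in the statement --- the $\pm\tau_\rho(E_K)$ piece and the $\bigl(u^{-2}(u+1)(x^2-4)-1\bigr)x^{-2}$ piece --- are each left unestablished.
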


\begin{remark} (1) One can see that the expression $(S_n(z) - S_{n-2}(z) - 2)/(z-2)$ is actually a polynomial in $z$.

(2) Theorem \ref{main2} generalizes the formula for the Reidemeister torsion of the 3-manifold obtained by a $\frac{p}{q}$-surgery on the figure eight knot by Kitano \cite{Ki2015}.
\end{remark}

\begin{example} 
(1) If $n=1$, then $K=J(2,2)$ is the trefoil knot. In this case the Riley polynomial is $\phi_K(s,u) = u-(x^2-3)$, and hence $$\tau_{\rho}(M)=-2 \left( u^{-2}(u+1) (x^2-4) - 1 \right)  x^{-2}=\frac{2}{x^2(x^2-3)^2}.$$

(2) If $n=-1$, then $K=J(2,-2)$ is the figure eight knot. In this case the Riley polynomial is $\phi_K(s,u) =u^2 -(u+1)(x^2-5)$, and hence $$\tau_{\rho}(M)=(2x-2)\left( u^{-2}(u+1)(x^2-4) - 1 \right)  x^{-2}=\frac{2x-2}{x^2(x^2-5)}.$$
\end{example}

The paper is organized as follows. In Section \ref{section-chev} we review the Chebyshev polynomials of the second kind and their properties. In Section \ref{nab} we give a formula for the Riley polynomial of a twist knot, and compute the trace of a canonical longitude. In Section \ref{section-R} we review the Reidemeister torsion of a knot complement and its computation using Fox's free calculus. We prove Theorems \ref{main1} and \ref{main2} in Section \ref{section-proof}.

\section{Chebyshev polynomials}

\label{section-chev}

Recall that $S_k(z)$ are the Chebychev polynomials defined by $S_0(z)=1$, $S_1(z)=z$ and $S_{k}(z) = z S_{k-1}(z) - S_{k-2}(z)$ for all integers $k$. The following lemma is elementary.

\begin{lemma} \label{chev} One has $S^2_k(z) - z S_k(z) S_{k-1}(z) + S^2_{k-1}(z)=1.$
\end{lemma}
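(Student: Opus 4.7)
The plan is a direct induction on $k$ using the three-term recursion that defines $S_k$. Set $f_k(z) := S_k^2(z) - z\, S_k(z) S_{k-1}(z) + S_{k-1}^2(z)$; I want to show $f_k \equiv 1$ for every integer $k$.

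For the base case I would take $k=1$: then $f_1 = z^2 - z\cdot z \cdot 1 + 1 = 1$ directly from $S_0=1, S_1=z$. For the inductive step, compute
\[
f_{k+1} = S_{k+1}^2 - z\, S_{k+1} S_k + S_k^2
\]
and substitute $S_{k+1} = z S_k - S_{k-1}$. Expanding,
\[
S_{k+1}^2 = z^2 S_k^2 - 2z\, S_k S_{k-1} + S_{k-1}^2,\qquad -z\, S_{k+1} S_k = -z^2 S_k^2 + z\, S_k S_{k-1},
\]
and after adding the remaining $S_k^2$ the $z^2 S_k^2$ terms cancel and one $z\, S_k S_{k-1}$ cancels against $-2z\, S_k S_{k-1}$, leaving exactly $S_k^2 - z\, S_k S_{k-1} + S_{k-1}^2 = f_k$. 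So $f_{k+1}=f_k$, and by induction $f_k=1$ for all $k \ge 1$.

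To reach negative indices I would run the recursion backward using $S_{k-2} = z\, S_{k-1} - S_k$, which shows in the same way that $f_{k-1}=f_k$; starting from $k=1$ this propagates the identity to $k\le 0$ (in particular one gets $S_{-1}=0$ from $S_1 = zS_0 - S_{-1}$, and $f_0 = 1 - 0 + 0 = 1$, consistent). No step here is a serious obstacle; the only thing to watch is booking the negative-$k$ case, which is handled by observing that the definition of $S_k$ is given "for all integers $k$" in the paper, so the recursion is a two-sided relation and the induction is bidirectional.
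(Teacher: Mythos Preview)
Your induction is correct and is exactly the kind of elementary argument the paper has in mind: the paper does not actually supply a proof, merely labels the lemma as elementary. Your computation that $f_{k+1}=f_k$ (and symmetrically $f_{k-1}=f_k$) together with $f_1=1$ is the standard verification.
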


Let $P_k(z) := \sum_{i=0}^k S_i(z)$. 

\begin{lemma}
\label{P_k}
One has $P_k(z) = \frac{S_{k+1}(z)-S_{k}(z)-1}{z-2}.$
\end{lemma}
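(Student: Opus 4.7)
The plan is to prove the identity by a direct telescoping argument using the three-term recurrence $S_k(z) = zS_{k-1}(z) - S_{k-2}(z)$. The idea is to rewrite $(z-2)S_i(z)$ as a difference of consecutive-difference terms so that the sum collapses.

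First I would observe that the recurrence, extended to negative indices, forces $S_{-1}(z) = 0$ (from $S_1 = zS_0 - S_{-1}$). Then, multiplying each summand by $z-2$ and using the recurrence in the form $zS_i(z) = S_{i+1}(z) + S_{i-1}(z)$, I get
\[
(z-2)S_i(z) = S_{i+1}(z) + S_{i-1}(z) - 2 S_i(z) = \bigl(S_{i+1}(z) - S_i(z)\bigr) - \bigl(S_i(z) - S_{i-1}(z)\bigr).
\]
Summing from $i=0$ to $i=k$, both pieces telescope: the first sum collapses to $S_{k+1}(z) - S_0(z) = S_{k+1}(z) - 1$, and the second collapses to $S_k(z) - S_{-1}(z) = S_k(z)$. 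Hence $(z-2)P_k(z) = S_{k+1}(z) - S_k(z) - 1$, which rearranges to the claimed formula after noting that the right-hand side is divisible by $z-2$ so the identity holds as an identity of polynomials (including at $z=2$).

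As a sanity check I would verify the base case $k=0$: the left side is $S_0(z)=1$, and the right side is $(S_1(z)-S_0(z)-1)/(z-2) = (z-2)/(z-2) = 1$. A fully equivalent alternative, if the telescoping feels terse, is a short induction on $k$ where the inductive step uses exactly the recurrence $S_{k+1}(z) = zS_k(z) - S_{k-1}(z)$ to pass from $P_{k-1}$ to $P_k$. I do not expect any real obstacle here: the recurrence does all the work, and the only subtlety worth mentioning is the convention $S_{-1}(z)=0$ needed to make the telescoping at the lower endpoint clean.
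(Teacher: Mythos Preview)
Your proof is correct and is essentially the same as the paper's: both multiply $P_k(z)$ by $z-2$, use $zS_i(z)=S_{i+1}(z)+S_{i-1}(z)$ to telescope the sum (with $S_{-1}(z)=0$), and arrive at $(z-2)P_k(z)=S_{k+1}(z)-S_k(z)-1$. The paper phrases it as computing $zP_k(z)=2P_k(z)+S_{k+1}(z)-S_k(z)-1$ directly rather than writing the explicit difference-of-differences, but this is only a cosmetic distinction.
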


\begin{proof}
We have 
\begin{eqnarray*}
z P_k(z) &=& z \sum_{i=0}^k S_i(z) = \sum_{i=0}^k \big( S_{i+1}(z) + S_{i-1}(z) \big) \\
&=& \big( P_k(z) + S_{k+1}(z)- S_0(z) \big) + \big( P_k(z) - S_{k}(z) + S_{-1}(z) \big)\\
&=& 2 P_k(z) + S_{k+1}(z)-S_{k}(z)-1.
\end{eqnarray*}
The lemma follows.
\end{proof}

\begin{lemma}
\label{P^2_k}
One has
$P^2_k(z)+P^2_{k-1}(z) - z P_k(z) P_{k-1}(z) = P_k(z) + P_{k-1}(z).$
\end{lemma}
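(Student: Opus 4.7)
The plan is to clear denominators via Lemma \ref{P_k} and reduce to a polynomial identity that follows directly from Lemma \ref{chev} and the Chebyshev recurrence.

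First I would introduce $U_k(z) := S_{k+1}(z) - S_k(z)$, so that Lemma \ref{P_k} reads $(z-2) P_k = U_k - 1$. Multiplying the claimed identity by $(z-2)^2$ and substituting, the terms linear in $U_k$ and $U_{k-1}$ on the two sides cancel and the constants match, so the claim reduces to the Chebyshev-type identity
$$U_k^2 + U_{k-1}^2 - z\, U_k U_{k-1} = 2 - z.$$

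To prove this, I would expand in terms of $S_{k+1}, S_k, S_{k-1}$ and regroup the quadratic contributions as $(S_{k+1}^2 + S_k^2 - z S_{k+1} S_k) + (S_k^2 + S_{k-1}^2 - z S_k S_{k-1})$, each of which equals $1$ by Lemma \ref{chev}, contributing $2$ in total. The leftover cross terms amount to $-2 S_k(S_{k+1} + S_{k-1}) + z(S_{k+1} S_{k-1} + S_k^2)$. Applying the defining recurrence $S_{k+1} + S_{k-1} = z S_k$ to the first bracket and the auxiliary identity $S_k^2 - S_{k+1} S_{k-1} = 1$ (which follows from Lemma \ref{chev} after writing $z S_k S_{k-1} = (S_{k+1} + S_{k-1}) S_{k-1}$) to the second, the whole expression collapses to $-z$, giving a total of $2 - z$ as required.

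The argument is essentially mechanical. The only mild design choice is the normalization $U_k = S_{k+1} - S_k$: with this shift the identity becomes formally analogous to Lemma \ref{chev}, which makes the cancellation pattern transparent. Induction on $k$ using $P_{k+1} = P_k + S_{k+1}$ is also possible but produces messier intermediate products of $P_k$'s and $S_j$'s, so I would prefer the direct reduction.
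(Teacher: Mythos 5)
Your proof is correct, but it takes a genuinely different route from the paper. The paper defines $Q_k := \bigl(P_k^2+P_{k-1}^2-zP_kP_{k-1}\bigr)-\bigl(P_k+P_{k-1}\bigr)$ and shows by a telescoping computation that $Q_{k+1}-Q_k=0$ (using $zP_k=P_{k+1}+P_{k-1}-1$), then checks the base case $Q_1=0$; it never invokes Lemma \ref{P_k}. You instead clear denominators via Lemma \ref{P_k}, set $U_k=S_{k+1}-S_k$, and reduce to the closed-form identity $U_k^2+U_{k-1}^2-zU_kU_{k-1}=2-z$, which you verify directly from Lemma \ref{chev}, the recurrence $S_{k+1}+S_{k-1}=zS_k$, and the auxiliary relation $S_k^2-S_{k+1}S_{k-1}=1$ (correctly derived from Lemma \ref{chev}). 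I checked your reduction and the final collapse to $2-z$; both are right. One small wording quibble: after substitution the constant terms on the two sides are $2-z$ and $4-2z$, so they do not literally ``match'' --- their difference $2-z$ is precisely what survives as the right-hand side of your reduced identity; the identity you state is nonetheless the correct one. As for trade-offs: the paper's telescoping argument is shorter and needs no base-case-free verification beyond $Q_1=0$, while your approach is induction-free at this stage and makes transparent the structural point that the shifted quantities $(z-2)P_k=U_k-1$ satisfy a Chebyshev-type quadratic relation exactly parallel to Lemma \ref{chev}; it also delivers the self-contained identity $U_k^2+U_{k-1}^2-zU_kU_{k-1}=2-z$, which could be of independent use.
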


\begin{proof}
Let $Q_k(z) = \big( P^2_k(z)+P^2_{k-1}(z) - z P_k(z) P_{k-1}(z) \big) - \big( P_k(z) + P_{k-1}(z) \big).$
We have $$Q_{k+1}(z) - Q_k(z) = \big( P_{k+1}(z) - P_{k-1}(z) \big) \big( P_{k+1}(z) + P_{k-1}(z) - z P_k(z) -1 \big).$$
Since $z P_k(z) = \sum_{i=0}^k \big( S_{i+1}(z) + S_{i-1}(z) \big) = P_{k+1}(z)-1 + P_{k-1}(z)$, we obtain $Q_{k+1}(z) = Q_k(z)$ for all integers $k$. Hence $Q_k(z)=Q_1(z)=0$. 
\end{proof}

\begin{proposition}
\label{formulas}
Suppose $V = \left[ \begin{array}{cc}
a & b \\
c & d \end{array} \right] \in SL_2(\BC)$. Then 
\begin{eqnarray}
V^k &=& \left[ \begin{array}{cc}
S_{k}(t) - d S_{k-1}(t) & b S_{k-1}(t) \\
c S_{k-1}(t) & S_{k}(t) - a S_{k-1}(t) \end{array} \right], \label{power}\\
\sum_{i=0}^k V^i &=& \left[ \begin{array}{cc}
P_{k}(t) - d P_{k-1}(t) & b P_{k-1}(t)\\
c P_{k-1}(t) & P_{k}(t) - a P_{k-1}(t) \end{array} \right], \label{sum-power}
\end{eqnarray}
where $t:= \tr V = a+d$. Moreover, one has
\begin{equation} 
\label{det-sum}
\det \left( \sum_{i=0}^k V^i \right) = \frac{S_{k+1}(z) - S_{k-1}(z)-2}{z-2}.
\end{equation}
\end{proposition}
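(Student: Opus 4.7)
The plan is to prove the three formulas in order, each one building on the previous. The key tool is the Cayley–Hamilton identity: since $V \in SL_2(\BC)$ has characteristic polynomial $X^2 - tX + 1$, we have $V^2 = tV - I$, which gives the recursion $V^{k+1} = tV^k - V^{k-1}$ that mirrors the defining recursion of $S_k$.

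First I would verify \eqref{power} by induction on $k$. The base cases $k=0$ and $k=1$ hold because $S_{-1}(t)=0$ and $S_0(t)=1$ (one checks the former from the recursion $S_1 = tS_0 - S_{-1}$). For the inductive step, applying $V^{k+1} = tV^k - V^{k-1}$ entrywise, each entry satisfies the recursion defining $S_k$ (with shifted arguments in the corner entries), and the claim follows. The negative-$k$ case is handled by the same recursion in the other direction, or by noting that $V^{-1}$ has the same form with $a \leftrightarrow d$ and $b,c \to -b,-c$, and by the symmetric extension $S_{-k}(t) = -S_{k-2}(t)$.

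Next, \eqref{sum-power} is immediate from \eqref{power} by summing over $i = 0, \ldots, k$ and using the definition $P_k(z) = \sum_{i=0}^k S_i(z)$ from Lemma \ref{P_k}. For \eqref{det-sum}, I would compute the determinant of the matrix in \eqref{sum-power} directly:
\begin{eqnarray*}
\det \Bigl( \sum_{i=0}^k V^i \Bigr) &=& \bigl(P_k(t) - dP_{k-1}(t)\bigr)\bigl(P_k(t) - aP_{k-1}(t)\bigr) - bc\, P_{k-1}(t)^2 \\
&=& P_k(t)^2 - t\, P_k(t) P_{k-1}(t) + (ad-bc)\, P_{k-1}(t)^2 \\
&=& P_k(t)^2 - t\, P_k(t) P_{k-1}(t) + P_{k-1}(t)^2,
\end{eqnarray*}
where the last equality uses $\det V = 1$. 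By Lemma \ref{P^2_k}, this expression equals $P_k(t) + P_{k-1}(t)$. Finally, applying Lemma \ref{P_k} to both summands,
\[
P_k(t) + P_{k-1}(t) = \frac{S_{k+1}(t) - S_k(t) - 1}{t-2} + \frac{S_k(t) - S_{k-1}(t) - 1}{t-2} = \frac{S_{k+1}(t) - S_{k-1}(t) - 2}{t-2},
\]
which is the claimed formula (with $z$ in place of $t$ as written in the statement).

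There is no real obstacle here; the argument is routine once one identifies the right use of Cayley–Hamilton and invokes Lemmas \ref{chev}, \ref{P_k}, and \ref{P^2_k}. The mildly delicate point is bookkeeping with $S_{-1}(t) = 0$ in the base case of the induction, and making sure the determinant calculation uses $ad - bc = 1$ at the right moment so that Lemma \ref{P^2_k} applies cleanly.
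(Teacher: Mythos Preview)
Your proposal is correct and follows essentially the same route as the paper: Cayley--Hamilton gives the recursion $V^{k}=tV^{k-1}-V^{k-2}$, induction yields \eqref{power}, summation gives \eqref{sum-power}, and the determinant computation together with Lemmas~\ref{P^2_k} and~\ref{P_k} gives \eqref{det-sum}. The only cosmetic difference is that the paper packages the induction as $V^k=S_k(t)I-S_{k-1}(t)V^{-1}$ and then substitutes the explicit form of $V^{-1}$, whereas you induct entrywise; also, Lemma~\ref{chev} is not actually needed here.
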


\begin{proof}
Since $\det V=1$, by the Cayley-Hamilton theorem we have $V^2-t V+I=0$. This implies that $V^k - t V^{k-1} + V^{k-2}=0$ for all integers $k$. Hence, by induction on $k$, one can show that $V^k=S_{k}(t) I - S_{k-1}(t) V^{-1}$. Since $V^{-1} = \left[ \begin{array}{cc}
d & -b \\
-c & a \end{array} \right]$, \eqref{power} follows.

Since $P_k(t) = \sum_{i=0}^k S_i(t)$, \eqref{sum-power} follows directly from \eqref{power}. By Lemma \ref{P^2_k} we have
\begin{eqnarray*}
\det \left( \sum_{i=0}^k V^i \right) &=& P^2_k(t)+(ad-bc)P^2_{k-1}(t)-(a+d)P_k(t)P_{k-1}(t)\\
&=& P^2_k(t)+P^2_{k-1}(t)-tP_k(t)P_{k-1}(t)\\
&=& P_k(t) + P_{k-1}(t).
\end{eqnarray*}
Then \eqref{det-sum} follows from Lemma \ref{P_k}.
\end{proof}

\section{Nonabelian representations} 

\label{nab}

In this section we give a formula for the Riley polynomial of a twist knot. This formula was already obtained in \cite{DHY, Mo}. We also compute the trace of a canonical longitude.

\subsection{Riley polynomial} Recall that $K = J(2,2n)$ and $E_K  = S^3 \setminus K$.  The fundamental group of $E_K$ has a presentation $\pi_1(E_K) = \la a, b \mid w^n a = b w^n\ra$ where $a,b$ are meridians and $w=ba^{-1}b^{-1}a$.  Suppose $\rho: \pi_1(E_K) \to SL_2(\BC)$ is a nonabelian representation. Up to conjugation, we may assume that $$\rho(a) = \left[ \begin{array}{cc}
s & 1 \\
0 & s^{-1} \end{array} \right] \quad \text{and} \quad \rho(b) = \left[ \begin{array}{cc}
s & 0 \\
-u & s^{-1} \end{array} \right]$$ where $(s,u) \in (\BC^*)^2$ is a root of the Riley polynomial $\phi_K(s,u)$.

We now compute $\phi_K(s,u)$. Since $$\rho(w) = \left[ \begin{array}{cc}
1-s^2u & s^{-1}-s-su \\
(s-s^{-1})u+su^2 & 1+(2-s^{-2})u+u^2 \end{array} \right],$$ by Lemma \ref{formulas} we have $$\rho(w^n) = \left[ \begin{array}{cc}
S_{n}(z) - \big( 1+(2-s^{-2})u+u^2 \big) S_{n-1}(z) & (s^{-1}-s-su) S_{n-1}(z) \\
\big( (s-s^{-1})u+su^2 \big) S_{n-1}(z) & S_{n}(z) - (1-s^2u) S_{n-1}(z) \end{array} \right],$$
where $z = \tr \rho(w)=2+(2-s^2-s^{-2})u+u^2$. Hence, by a direct computation we have
$$\rho(w^n a -  b w^n) = \left[ \begin{array}{cc}
0 & \phi_K(s,u) \\
u \phi_K(s,u) & 0\end{array} \right]$$ where
$$\phi_K(s,u) = S_n(z) - \left( u^2 -(u+1)(s^2+s^{-2}-3) \right)S_{n-1}(z).$$

\subsection{Trace of the longitude} It is known that the canonical longitude corresponding to the meridian $\mu=a$ is $\lambda=\overleftarrow{w}^n w^n$, where $\overleftarrow{w}$ is the word in the letters $a,b$ obtained by writing $w$ in the reversed order. We now compute its trace. This computation will be used in the proof of Theorem \ref{main2}.

\begin{lemma} 
\label{S^2}
One has
$S^2_{n-1}(z)= \frac{1}{( u + 2 - s^2 - s^{-2}) \left( u^2-  (s^2+s^{-2}-2)(u+1) \right)}.$
\end{lemma}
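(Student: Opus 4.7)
The plan is to use the Chebyshev identity from Lemma \ref{chev} together with the Riley equation $\phi_K(s,u)=0$ to isolate $S_{n-1}^2(z)$. Concretely, from the formula
$$\phi_K(s,u) = S_n(z) - \bigl( u^2 - (u+1)(s^2+s^{-2}-3) \bigr) S_{n-1}(z)$$
derived in the previous subsection, the Riley condition $\phi_K(s,u)=0$ gives $S_n(z) = A\cdot S_{n-1}(z)$, where I set $A := u^2 - (u+1)(s^2+s^{-2}-3)$.

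Substituting this into the Chebyshev identity
$$S_n^2(z) - z S_n(z) S_{n-1}(z) + S_{n-1}^2(z) = 1$$
immediately yields
$$S_{n-1}^2(z)\bigl(A^2 - zA + 1\bigr) = 1,$$
so the lemma reduces to verifying the polynomial identity
$$A^2 - zA + 1 = \bigl( u + 2 - s^2 - s^{-2}\bigr)\bigl( u^2 - (s^2+s^{-2}-2)(u+1)\bigr),$$
with $z = u^2 + (2-s^2-s^{-2})u + 2$.

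This final step is a purely algebraic check. To keep things readable I would set $y := s^2+s^{-2}$, so that $A = u^2 - (y-3)u - (y-3)$ and $z = u^2 - (y-2)u + 2$, and expand both sides as polynomials in $u$ with coefficients in $\mathbb{Z}[y]$. Matching coefficients of $u^3, u^2, u^1, u^0$ reduces everything to the single numerical identity $(y-3)(y-1) + 1 = (y-2)^2$, which is obvious. This is the only slightly laborious point, and it is the step where a bookkeeping slip is most likely, but there is no conceptual obstacle; the substitution trick via $\phi_K=0$ is what makes the identity come out cleanly.
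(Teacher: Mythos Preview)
Your proposal is correct and follows exactly the same route as the paper: use $\phi_K(s,u)=0$ to write $S_n(z)=A\,S_{n-1}(z)$, substitute into the Chebyshev identity of Lemma~\ref{chev} to get $(A^2-zA+1)S_{n-1}^2(z)=1$, and then check the remaining polynomial identity by direct expansion. Your introduction of $y=s^2+s^{-2}$ merely makes explicit the ``direct computation'' that the paper leaves to the reader.
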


\begin{proof}
Since $(s,u) \in (\BC^*)^2$ is a root of the Riley polynomial $\phi_K(s,u)$, we have $S_n(z) = \left( u^2 -(u+1)(s^2+s^{-2}-3) \right)S_{n-1}(z)$. Lemma \ref{chev} then implies that
\begin{eqnarray*}
1 &=& S^2_n(z) - z S_n(z) S_{n-1}(z) + S^2_{n-1}(z) \\
&=& \Big( \left( u^2 -(u+1)(s^2+s^{-2}-3) \right)^2 - z \left( u^2 -(u+1)(s^2+s^{-2}-3) \right) + 1 \Big) S^2_{n-1}(z).
\end{eqnarray*}
By replacing $z=2+(2-s^2-s^{-2})u+u^2$ into the first factor of the above expression, we obtain the desired equality.
\end{proof}

\begin{proposition}
\label{longitude}
One has $\tr\rho(\lambda) -2 = \frac{u^2(s^2+s^{-2}+2)}{(u+1)(s^2+s^{-2}-2)-u^2}.$
\end{proposition}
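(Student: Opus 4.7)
The plan is to combine a Cayley--Hamilton expansion of $\rho(w)^n$ and $\rho(\overleftarrow{w})^n$ with a direct computation of the auxiliary trace $\tr\rho(w\overleftarrow{w})$. Write $\tilde w:=\overleftarrow{w}$. First I would note that $\tr\rho(\tilde w)=\tr\rho(w)=z$: indeed, $\tilde w=ab^{-1}a^{-1}b$ is obtained from $w=ba^{-1}b^{-1}a$ by interchanging $a$ and $b$, and the Fricke commutator-trace identity shows that each of these traces depends only on $\tr\rho(a)$, $\tr\rho(b)$ and $\tr\rho(ab^{-1})=\tr\rho(ba^{-1})$ (the last equality using $\tr(X)=\tr(X^{-1})$ for $X\in SL_2(\BC)$). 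Applying the Cayley--Hamilton identity $V^n=S_n(\tr V)\,I-S_{n-1}(\tr V)\,V^{-1}$ (as in the proof of Proposition \ref{formulas}) to both $\rho(w)$ and $\rho(\tilde w)$, multiplying the two expressions, taking trace, and using Lemma \ref{chev} to collapse $2S_n^2(z)-2zS_n(z)S_{n-1}(z)=2-2S_{n-1}^2(z)$, I obtain the key identity
\begin{equation*}
\tr\rho(\lambda)-2 \;=\; S_{n-1}^2(z)\bigl(\tr\rho(w\tilde w)-2\bigr).
\end{equation*}

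The main task is then to evaluate $\tr\rho(w\tilde w)$ explicitly. The matrix $\rho(w)$ is already recorded in Section \ref{nab}, and $\rho(\tilde w)=\rho(a)\rho(b)^{-1}\rho(a)^{-1}\rho(b)$ is obtained by a parallel multiplication. Multiplying these two $2\times 2$ matrices and collecting terms in powers of $u$, and writing $v:=s^2+s^{-2}$ for brevity, I expect the constant term to contribute $2$, the coefficient of $u$ to vanish, and the remaining piece to factor, producing
\begin{equation*}
\tr\rho(w\tilde w)-2 \;=\; -u^2(v+2)(u+2-v).
\end{equation*}

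Substituting this into the identity above and invoking Lemma \ref{S^2} for $S_{n-1}^2(z)$, the common factor $u+2-s^2-s^{-2}$ cancels between numerator and denominator, and the claimed formula
\begin{equation*}
\tr\rho(\lambda)-2 \;=\; \frac{u^2(s^2+s^{-2}+2)}{(u+1)(s^2+s^{-2}-2)-u^2}
\end{equation*}
drops out. The main obstacle is the bookkeeping required in the explicit computation of $\tr\rho(w\tilde w)$: the symmetry between $\tilde w$ and $w$ under the interchange $a\leftrightarrow b$ translates into an $s\leftrightarrow s^{-1}$ symmetry between corresponding matrix entries, which helps organize the diagonal contributions $W_{11}\tilde W_{11}+W_{22}\tilde W_{22}$, but the off-diagonal products $W_{12}\tilde W_{21}+W_{21}\tilde W_{12}$ still have to be expanded by hand before the clean factorization above becomes visible.
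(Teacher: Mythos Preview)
Your proposal is correct and follows essentially the same route as the paper: both arguments expand $\rho(w^n)$ and $\rho(\overleftarrow{w}^n)$ via the Cayley--Hamilton/Chebyshev identity, take the trace of the product, collapse the $S_n^2$ and $S_nS_{n-1}$ terms with Lemma \ref{chev}, and finish by substituting Lemma \ref{S^2}. The only cosmetic difference is that you first isolate the clean intermediate identity $\tr\rho(\lambda)-2=S_{n-1}^2(z)\bigl(\tr\rho(w\overleftarrow{w})-2\bigr)$ and then compute $\tr\rho(w\overleftarrow{w})$, whereas the paper multiplies out the explicit $2\times2$ matrices for $\rho(\overleftarrow{w}^n)$ and $\rho(w^n)$ directly and reads off the same coefficient $2+(s^4+s^{-4}-2)u^2-(s^2+s^{-2}+2)u^3$ of $S_{n-1}^2(z)$; your factored form $-u^2(v+2)(u+2-v)$ of this coefficient agrees with the paper's.
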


\begin{proof}
By Lemma \ref{formulas} we have $$\rho(w^n) = \left[ \begin{array}{cc}
S_{n}(z) - \big( 1+(2-s^{-2})u+u^2 \big) S_{n-1}(z) & (s^{-1}-s-su) S_{n-1}(z)\\
\big( (s-s^{-1})u+su^2 \big) S_{n-1}(z) & S_{n}(z) - (1-s^2u) S_{n-1}(z) \end{array} \right].$$ 
Similarly,
$$\rho(\overleftarrow{w}^n) = \left[ \begin{array}{cc}
S_{n}(z) - (1-s^{-2}u) S_{n-1}(z) & (s-s^{-1}-s^{-1}u) S_{n-1}(z)\\
\big( (s^{-1}-s)u+s^{-1}u^2 \big) S_{n-1}(z) & S_{n}(z) - \big( 1+(2-s^2)u+u^2 \big) S_{n-1}(z) \end{array} \right].$$
Hence, by a direct calculation we have
\begin{eqnarray*}
\tr \rho(\lambda) &=& \tr (\rho(\overleftarrow{w}^n)\rho(w)) \\
&=& 2 S^2_{n}(z) - 2 z S_{n}(z) S_{n-1}(z) + \big( 2 + (s^4+s^{-4}-2) u^2 - (s^2+s^{-2}+2)u^3\big) S^2_{n-1}(z)\\
&=& 2 + u^2 (s^2+s^{-2}+2) ( s^2+s^{-2}-2 - u ) S^2_{n-1}(z).
\end{eqnarray*}
The lemma then follows from Lemma \ref{S^2}. 
\end{proof}

\section{Reidemeister torsion} 

\label{section-R}

In this section we briefly review the Reidemeister torsion of a knot complement and its computation using Fox's free calculus. For more details on the Reidemeister torsion, see \cite{Jo, Mi61, Mi62, Mi66, Tu}.

\subsection{Torsion of a chain complex}

Let $C$ be a chain complex of finite dimensional vector spaces over $\BC$:
$$
C = \left( 0 \to C_m \stackrel{\partial_m}{\longrightarrow} C_{m-1} 
\stackrel{\partial_{m-1}}{\longrightarrow} \cdots \stackrel{\partial_{2}}{\longrightarrow} 
C_1 \stackrel{\partial_{1}}{\longrightarrow} C_0 \to 0\right)
$$
such that for each $i=0,1, \cdots, m$ the followings hold
\begin{itemize}
\item the homology group $H_i(C)$ is trivial, and
\item a preferred basis $c_i$ of $C_i$ is given.
\end{itemize}

Let $B_i\subset C_i$ be the image of $\partial_{i+1}$. For each $i$ 
choose a basis $b_i$ of $B_i$. The short exact sequence of $\BC$-vector spaces
$$
0 \to B_{i} \longrightarrow C_i \stackrel{\partial_i}{\longrightarrow} B_{i-1} \to 0
$$
implies that  a new basis of $C_i$ can be obtained by taking the union of the vectors of $b_i$
and some lifts $\tilde{b}_{i-1}$ of the vectors $b_{i-1}$. Define $[(b_i \cup \tilde{b}_{i-1})/c_i]$ to be the determinant of the matrix expressing $(b_i \cup \tilde{b}_{i-1})$ in the basis $c_i$. Note that this scalar does not depend on the choice of the lift $\tilde{b}_{i-1}$ of $b_{i-1}$.

\begin{definition} The \emph{torsion} of $C$ is defined to be
$$
\tau(C) := \prod_{i=0}^m \ [(b_i \cup \tilde{b}_{i-1})/c_i]^{(-1)^{i+1}} \ \in \BC\setminus\{0\}.
$$
\end{definition}

\begin{remark} Once a preferred basis of $C$ is given, $\tau(C)$ is independent of the choice of $b_0,\dots,b_m$.
\end{remark}

\subsection{Reidemeister torsion of a CW-complex} Let $M$ be a finite CW-complex and $\rho: \pi_1(M) \to SL_2(\BC)$ a representation. Denote by $\tilde{M}$ the universal covering of $M$. The fundamental group $\pi_1(M)$ acts on $\tilde{M}$ as deck transformations. Then the chain complex $C(\tilde{M}; \BZ)$ has the structure of a chain complex of left $\BZ[\pi_1(M)]$-modules. 

Let $V$ be the 2-dimensional vector space $\BC^2$ with the canonical basis $\{e_1, e_2\}$. Using the representation $\rho$, $V$ has the structure of a right $\BZ[\pi_1(M)]$-module which we denote by $V_{\rho}$. Define the chain complex $C(M; V_{\rho})$ to be $C(\tilde{M}; \BZ) \otimes_{\BZ[\pi_1(M)]} V_{\rho}$, and choose a preferred basis of $C(M; V_{\rho})$ as follows. Let $\{u^i_1, \cdots, u^i_{m_i}\}$ be the set of $i$-cells of $M$, and choose a lift $\tilde{u}^i_j$ of each cell. Then 
$\{ \tilde{u}^i_1 \otimes e_1, \tilde{u}^i_1 \otimes e_2, \cdots, \tilde{u}^i_{m_i} \otimes e_1, \tilde{u}^i_{m_i} \otimes e_2\}$ is chosen to be the preferred basis of $C_i(M; V_{\rho})$.

A representation $\rho$ is called \textit{acyclic} if all the homology groups $H_i(M; V_{\rho})$ are trivial. 

\begin{definition}
The Reidemeister torsion $\tau_{\rho}(M)$ is defined as follows:
$$\tau_{\rho}(M)=\begin{cases} \tau(C(M; V_{\rho})) &\mbox{if } \rho \mbox{ is acyclic}, \\ 
0 & \mbox{otherwise}. \end{cases} $$
\end{definition}

\subsection{Reidemeister torsion of a knot complement and Fox's free calculus}

Let $L$ be a knot in $S^3$ and $E_L$ its complement. We choose a Wirtinger presentation for the fundamental group of $E_L$:
$$
\pi_1(E_L)=
\langle a_1,\ldots,a_l~|~r_1,\ldots,r_{l-1}\rangle.
$$
Let 
$\rho: \pi_1(E_L) \to SL_2(\BC)$ be a representation. This map induces a ring homomorphism $\rho: {\BZ}[\pi_1(E_L)]\to M_2(\BC)$, 
where ${\BZ}[\pi_1(E_L)]$ is the group ring of $\pi_1(E_L)$ 
and 
$M_2(\BC)$ is the matrix algebra of degree $2$ over ${\BC}$. 
Consider the $(l-1)\times l$ matrix $A$ 
whose $(i,j)$-component is the $2\times 2$ matrix 
$$
\rho\left(\frac{\partial r_i}{\partial a_j}\right)
\in M_2(\BC),
$$
where 
${\partial}/{\partial a}$ 
denotes the Fox calculus. 
For 
$1\leq j\leq l$, 
denote by $A_j$ 
the $(l-1)\times(l-1)$ matrix obtained from $A$ 
by removing the $j$th column. 
We regard $A_j$ as 
a $2(l-1)\times 2(l-1)$ matrix with coefficients in 
$\BC$. Then Johnson showed the following.

\begin{theorem} \cite{Jo} 
\label{Johnson}
Let $\rho: \pi_1(E_L) \to SL_2(\BC)$ be a representation such that $\det(\rho(a_1)-I) \not= 0$. Then the Reidemeister torsion of $E_L$ is given by
$$
\tau_\rho(E_L)
=\frac{\det A_1}{\det(\rho(a_1)-I)}. 
$$
\end{theorem}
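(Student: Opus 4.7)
The plan is to identify the $\rho$-twisted cellular chain complex of $E_L$ with an explicit three-term complex built from the Wirtinger presentation, and then read off the torsion as a single determinant after a column rescaling.

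First I would pass from $E_L$ to its presentation 2-complex $X$: one $0$-cell, one $1$-cell per generator $a_j$, and one $2$-cell attached via each relator $r_i$. For a Wirtinger presentation of a knot group the inclusion $X \hookrightarrow E_L$ is a simple homotopy equivalence, so $\tau_\rho(E_L) = \tau_\rho(X)$. Choosing a basepoint lift $\tilde *\in\tilde X$ and lifts $\tilde a_j, \tilde r_i$ of the remaining cells turns $C_*(\tilde X;\BZ)$ into a complex of free $\BZ[\pi_1(E_L)]$-modules, and the classical Fox-calculus computation yields
$$\partial_2 \tilde r_i \;=\; \sum_{j=1}^{l} \frac{\partial r_i}{\partial a_j}\,\tilde a_j, \qquad \partial_1 \tilde a_j \;=\; (a_j-1)\,\tilde *.$$
Tensoring with $V_\rho=\BC^2$ produces the three-term complex
$$0 \longrightarrow \BC^{2(l-1)} \xrightarrow{\ D_2\ } \BC^{2l} \xrightarrow{\ D_1\ } \BC^{2} \longrightarrow 0$$
equipped with the preferred basis from Section~\ref{section-R}. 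In these bases, $D_2$ has $(j,i)$-block $\rho(\partial r_i/\partial a_j)$ and the $j$-th block of $D_1$ is $\rho(a_j)-I$.

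Next I would compute $\tau_\rho(X)$ directly from the definition. Assuming acyclicity, take $b_0=c_0$ (the standard basis of $\BC^2$) and $b_1=D_2(c_2)$ (a basis of $\mathrm{im}\,D_2 = \ker D_1$). Because $\det(\rho(a_1)-I)\neq 0$, the first $2\times 2$ block of $D_1$ is invertible, and I choose the lift $\tilde b_0$ to be the $2l\times 2$ matrix $L_1$ whose first block is $(\rho(a_1)-I)^{-1}$ and whose other blocks vanish. The $C_0$ and $C_2$ factors of the alternating product both reduce to $1$, leaving
$$\tau_\rho(X) \;=\; \det [\, M_{D_2} \mid L_1 \,],$$
where $M_{D_2}$ is the $2l\times 2(l-1)$ matrix of $D_2$. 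Right-multiplying the last two columns of $[M_{D_2}\mid L_1]$ by $\rho(a_1)-I$ scales the determinant by $\det(\rho(a_1)-I)$ and replaces $L_1$ with the matrix $E_1$ that has $I_2$ in its first block and $0$ elsewhere. A Laplace expansion of $\det[M_{D_2}\mid E_1]$ along the last two columns then collapses it to the minor of $M_{D_2}$ obtained by deleting the first block of rows, which up to sign equals $\det A_1$. Rearranging yields the claimed formula.

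The main obstacle is sign bookkeeping: one must align the orientations of the $\tilde a_j$ and $\tilde r_i$, together with the ordering convention for the preferred basis, so that the sign from the Laplace expansion cancels against the alternating-product exponents in the definition of torsion. A secondary point is the nonacyclic case: tracing ranks through the three-term complex shows that if $\rho$ is not acyclic then $M_{D_2}$ restricted to the rows outside the first block is singular, so $\det A_1=0$ and both sides of the identity vanish under the convention $\tau_\rho\equiv 0$ on nonacyclic complexes. Beyond these conventions, the proof consists of the standard Fox-calculus description of cellular differentials and a single block Laplace expansion.
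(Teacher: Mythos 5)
The paper does not prove this statement: it is quoted from Johnson's unpublished lecture notes \cite{Jo} and used as a black box, so there is no in-paper proof to compare yours against. Judged on its own terms, your argument is the standard derivation of Johnson's formula and is correct in outline: replace $E_L$ by the Wirtinger presentation $2$-complex, identify the twisted chain complex as $0\to\BC^{2(l-1)}\to\BC^{2l}\to\BC^{2}\to 0$ with $\partial_2$ the Fox Jacobian and $\partial_1$ built from the blocks $\rho(a_j)-I$, choose $b_0=c_0$ and $b_1=\partial_2(c_2)$ so that only the middle factor of the alternating product survives, lift $b_0$ through the invertible first block of $\partial_1$, and collapse the resulting determinant by a block Laplace expansion. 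Your treatment of the nonacyclic case is also right: since $\det(\rho(a_1)-I)\neq 0$ forces $H_0=0$, acyclicity fails exactly when $\partial_2$ drops rank, in which case every maximal minor, including $\det A_1$, vanishes. Two ingredients are asserted rather than established and deserve explicit justification if this were written out in full: (i) the simple homotopy equivalence between $E_L$ and the presentation $2$-complex of a Wirtinger presentation (standard, but it is the step that makes $\tau_\rho$ computable from the group presentation at all); and (ii) the sign and transpose bookkeeping --- your $M_{D_2}$ with $(j,i)$-block $\rho(\partial r_i/\partial a_j)$ is the block-transpose of the matrix $A$ of Section \ref{section-R}, not its literal transpose, so one should either fix the row/column convention for the action of $\BZ[\pi_1]$ on $V_\rho$ consistently or note that for an even-dimensional representation all the reorderings contribute determinant $+1$, which is what removes the usual sign ambiguity and yields the exact equality claimed. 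Neither point is a gap in substance, only in execution.
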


\section{Proof of main results}

\label{section-proof}

\subsection{Proof of Theorem \ref{main1}} We will apply Theorem \ref{Johnson} to calculate the Reidemeister torsion of the complement $E_K$ of the twist knot $K=J(2,2n)$. 

Recall that $\pi_1(E_K)=\la a,b \mid w^n a = b w^n\ra$. We have $\det (\rho(b)-I) = 2- (s+s^{-1})=2-x$. Let $r=w^naw^{-n}b^{-1}$. By a direct computation we have 
\begin{eqnarray*}
\frac{\partial r}{\partial a} &=& w^n \left( 1+(1-a)(w^{-1}+ \cdots + w^{-n})\frac{\partial w}{\partial a} \right)\\
&=& w^n \left( 1+(1-a)(1+ w^{-1} + \cdots + w^{-(n-1)})a^{-1}(1-b) \right) .
\end{eqnarray*}
Suppose $x \not=2$. Then $\det (\rho(b)-I) \not= 0$ and hence $$\tau_{\rho}(E_K) = \det \rho \left( \frac{\partial r}{\partial a} \right) \big/ \det (\rho(b)-I) = \det \rho \left( \frac{\partial r}{\partial a} \right) \big/ (2-x).$$.

Let $\Delta = \rho(1+ w^{-1} + \cdots + w^{-(n-1)})$ and $\Omega = \rho \big( a^{-1}(1-b)(1-a)\big) \Delta$. Then $$\det \rho \left( \frac{\partial r}{\partial a} \right) = \det (I + \Omega) = 1 + \det \Omega + \tr \Omega.$$

\begin{lemma}
\label{det-O}
One has $\det \Omega = (2-x)^2 \left( \frac{S_{n}(z) - S_{n-2}(z)-2}{z-2} \right)$.
\end{lemma}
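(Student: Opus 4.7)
The plan is to split $\det \Omega$ via multiplicativity of the determinant as $\det \Omega = \det \rho\bigl(a^{-1}(1-b)(1-a)\bigr) \cdot \det \Delta$ and evaluate the two factors separately. For the first factor, $\rho(a^{-1}) \in SL_2(\BC)$ has determinant $1$, and for any $M \in SL_2(\BC)$ the Cayley--Hamilton identity yields $\det(I-M) = 1 - \tr M + \det M = 2 - \tr M$. Applying this to $M = \rho(a)$ and $M = \rho(b)$, both of which have trace $x$, gives $\det \rho\bigl(a^{-1}(1-b)(1-a)\bigr) = (2-x)^2$.

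For the second factor, I would apply equation \eqref{det-sum} of Proposition \ref{formulas} to the matrix $V := \rho(w^{-1})$ with $k := n-1$. The Cayley--Hamilton relation $V + V^{-1} = (\tr V) I$ in $SL_2(\BC)$ implies $\tr V^{-1} = \tr V$, so $\tr \rho(w^{-1}) = \tr \rho(w) = z$. Equation \eqref{det-sum} then produces $\det \Delta = \det\bigl(\sum_{i=0}^{n-1} \rho(w)^{-i}\bigr) = \frac{S_n(z) - S_{n-2}(z) - 2}{z-2}$, and multiplying the two factors gives the claimed identity.

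The argument is essentially a direct corollary of Proposition \ref{formulas}, so there is no serious obstacle. The only small verifications required are the trace identity $\tr \rho(w^{-1}) = \tr \rho(w)$ (from Cayley--Hamilton in $SL_2$) and the elementary identity $\det(I-M) = 2 - \tr M$ for $M \in SL_2(\BC)$; everything else is bookkeeping.
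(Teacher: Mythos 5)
Your proposal is correct and follows essentially the same route as the paper: factor $\det\Omega = \det\rho\bigl(a^{-1}(1-b)(1-a)\bigr)\cdot\det\Delta$, evaluate the first factor as $(2-x)^2$ via $\det(I-M)=2-\tr M$ for $M\in SL_2(\BC)$, and obtain $\det\Delta$ from \eqref{det-sum} applied to $V=\rho(w^{-1})$ with $k=n-1$ using $\tr\rho(w^{-1})=\tr\rho(w)=z$. The paper's proof is just a terser version of the same computation.
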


\begin{proof}
Since $\tr\rho(w^{-1}) = \tr \rho(w)=z$, by Lemma \ref{formulas} we have $\det \Delta = \frac{S_{n}(z) - S_{n-2}(z)-2}{z-2}$. The lemma follows, since $\det \Omega = \det \rho(a^{-1}(1-a)(1-b)) \det \Delta = (2-x)^2 \det \Delta$.
\end{proof} 

\begin{lemma}
\label{trace-O}
One has $\tr \Omega = x(2-x)S_{n-1}(z) -1.$
\end{lemma}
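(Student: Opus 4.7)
My plan is to exploit the expansion $\Delta = P_{n-1}(z)\,I - P_{n-2}(z)\,\rho(w)$, which follows from formula (\ref{sum-power}) applied to $V = \rho(w^{-1})$ (which has the same trace $z$ as $\rho(w)$). Substituting into $\Omega = \rho(a^{-1}(1-b)(1-a))\Delta$ gives
$$
\tr\Omega = P_{n-1}(z)\,T_1 - P_{n-2}(z)\,T_2,
$$
where $T_1 = \tr\rho\bigl(a^{-1}(1-b)(1-a)\bigr)$ and $T_2 = \tr\rho\bigl(a^{-1}(1-b)(1-a)\,w\bigr)$. The strategy is to evaluate $T_1$ and $T_2$ using only cyclic invariance and one direct matrix computation, then reorganize in terms of $S_{n-1}(z)$ and $S_{n-2}(z)$, and close the calculation with the Riley polynomial relation.

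For $T_1$, I expand $(1-b)(1-a) = 1 - a - b + ba$ so that $a^{-1}(1-b)(1-a) = a^{-1} - 1 - a^{-1}b + a^{-1}ba$, and use cyclic invariance to reduce each piece to $\tr\rho(a^{-1}) = x$, $\tr\rho(a^{-1}ba) = \tr\rho(b) = x$, and $\tr\rho(a^{-1}b) = 2+u$ (the last obtained by a direct $2\times 2$ multiplication). This yields $T_1 = 2x - 4 - u$. For $T_2$, I similarly expand $a^{-1}(1-b)(1-a)w$ as a sum of four words in $a,b$ (using $w = ba^{-1}b^{-1}a$) and cyclically reduce each trace: the outer two contract to $\tr\rho(a^{-1}) = x$, the middle terms give $\tr\rho(w) = z$ and $\tr\rho(ba^{-1}) = 2+u$, so $T_2 = 2x - 2 - u - z$.

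Combining and writing $S_{n-1}(z) = P_{n-1}(z) - P_{n-2}(z)$, I get
$$
\tr\Omega = (2x - u)\,S_{n-1}(z) - 4\,P_{n-1}(z) + (z+2)\,P_{n-2}(z).
$$
Now I apply Lemma \ref{P_k}, i.e.\ $(z-2)P_k = S_{k+1} - S_k - 1$, together with the Chebyshev recurrence $S_n = zS_{n-1} - S_{n-2}$, to reduce $-4P_{n-1} + (z+2)P_{n-2}$ to $-3S_{n-1}(z) - S_{n-2}(z) - 1$. Hence $\tr\Omega = (2x - u - 3)\,S_{n-1}(z) - S_{n-2}(z) - 1$. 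The main (and essentially only) nontrivial step remaining is to notice that the Riley polynomial relation $\phi_K(s,u)=0$ gives $S_n(z) = \bigl(u^2 - (u+1)(x^2-5)\bigr)S_{n-1}(z)$; substituting $z = u^2 - (x^2-4)u + 2$ into $S_{n-2} = zS_{n-1} - S_n$ simplifies to $S_{n-2}(z) = (x^2 - u - 3)\,S_{n-1}(z)$. Plugging this in collapses $\tr\Omega$ to $(2x - x^2)\,S_{n-1}(z) - 1 = x(2-x)\,S_{n-1}(z) - 1$, as claimed. The chief obstacle is not conceptual but bookkeeping: keeping the Chebyshev identities and the Riley constraint straight so the $S_{n-2}$ term cancels against the remaining $(x^2 - u - 3)S_{n-1}$.
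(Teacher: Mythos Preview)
Your proof is correct and follows essentially the same route as the paper's: both express $\tr\Omega$ as a linear combination of $P_{n-1}(z)$ and $P_{n-2}(z)$, convert to $S_{n-1}(z)$ and $S_{n-2}(z)$ via Lemma~\ref{P_k}, and then use the Riley relation $\phi_K(s,u)=0$ to eliminate $S_{n-2}(z)$. The only cosmetic difference is that you compute the two trace coefficients $T_1, T_2$ by expanding into words and using cyclic invariance, whereas the paper multiplies the explicit matrices for $\rho(a^{-1}(1-b)(1-a))$ and $\Delta$ directly; the resulting expressions coincide.
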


\begin{proof}
Since $\rho(w^{-1}) = \left[ \begin{array}{cc}
1+(2-s^{-2})u+u^2 & s-s^{-1}+su \\
(s^{-1}-s)u-su^2 & 1-s^2u \end{array} \right]$, by Lemma \ref{formulas} we have
$$\Delta = \left[ \begin{array}{cc}
P_{n-1}(z) - (1-s^2u) P_{n-2}(z) & (s-s^{-1}+su) P_{n-2}(z)\\
\big( (s^{-1}-s)u-su^2 \big) P_{n-2}(z) & P_{n-1}(z) - \big( 1+(2-s^{-2})u+u^2 \big) P_{n-2}(z) \end{array} \right].$$

By a direct computation we have $$\rho(a^{-1}(1-b)(1-a)) = \left[ \begin{array}{cc}
s + s^{-1} -2 + (s-1)u & s^{-1}-s^{-2}+u\\
su-s^2u & s+s^{-1}-2-su \end{array} \right].$$ Hence 
\begin{eqnarray*}
\tr \Omega &=& \tr \left( \rho \big( a^{-1}(1-b)(1-a)\big) \Delta \right)\\
&=& (2s + 2s^{-1}-4-u)P_{n-1}(z) + \big( 4-2s-2s^{-1}+(3-s^2-s^{-2})u+u^2 \big) P_{n-2}(z)\\
&=& (2s + 2s^{-1}-4-u) \big( P_{n-1}(z) - P_{n-2}(z) \big) + ((2-s^2-s^{-2})u+u^2)P_{n-2}(z)\\
&=& (2s + 2s^{-1}-4-u) S_{n-1}(z) + (z-2)P_{n-2}(z)\\
&=& (2s + 2s^{-1}-4-u) S_{n-1}(z) + S_{n-1}(z) - S_{n-2}(z) -1.
\end{eqnarray*}

Since $(s,u)$ satisfies $\phi_K(s,u)=0$, we have $S_n(z) = \left( u^2 -(u+1)(s^2+s^{-2}-3) \right)S_{n-1}(z)$. This implies that $S_{n-2}(z) = z S_n(z) - S_{n-1}(z) = (s^2+s^{-2}-1-u) S_{n-1}(z).$ Hence $$\tr \Omega = \big( 2s+2s^{-1}-s^2-s^{-2}-2 \big) S_{n-1}(z).$$ 
The lemma follows since $2s+2s^{-1}-s^2-s^{-2}-2 = x(2-x)$.
\end{proof}

We now complete the proof of Theorem \ref{main1}. Lemmas \ref{det-O} and \ref{trace-O} imply that $$\det \rho \left( \frac{\partial r}{\partial a} \right) = 1 + \det \Omega + \tr \Omega = (2-x)^2 \left( \frac{S_{n}(z) - S_{n-2}(z)-2}{z-2} \right) + x(2-x)S_{n-1}(z).$$
Since $\tau_{\rho}(E_K) = \det \rho \left( \frac{\partial r}{\partial a} \right) \big/ (2-x)$, we obtain the desired formula for $\tau_{\rho}(E_K)$.

\begin{remark}
In \cite{Mo}, Morifuji proved a similar formula for the twisted Alexander polynomial of twist knots for nonabelian representations.
\end{remark}

\subsection{Proof of Theorem \ref{main2}} Suppose $\rho: \pi_1(E_K) \to SL_2(\BC)$ is a nonabelian representation which extends to a representation $\rho: \pi_1(M) \to SL_2(\BC)$. Recall that $\lambda$ is the canonical longitude corresponding to the meridian $\mu=a$. If $\tr \rho(\lambda) \not= 2$, then by \cite{Ki2015} (see also \cite{Ki1994-fibered, Ki1994-8}) the Reidemeister torsion of $M$ is given by   
\begin{equation}
\label{Dehn}
\tau_{\rho}(M) = \frac{\tau_{\rho}(E_K)}{2-\tr \rho(\lambda)}.
\end{equation}

By Theorem \ref{main1} we have $\tau_{\rho}(E_K)=
( 2-x) \frac{S_n(z) - S_{n-2}(z) - 2}{z-2} + x S_{n-1}(z)$ if $x \not=2$. By Proposition \ref{longitude} we have $\tr\rho(\lambda) -2 = \frac{x^2}{u^{-2}(u+1)(x^2-4)-1}.$ Theorem \ref{main2} then follows from \eqref{Dehn}.


\begin{thebibliography}{99999}

\baselineskip15pt

\bibitem[DHY]{DHY} J. Dubois, V. Huynh and Y. Yamaguchi, {\em Non-abelian Reidemeister torsion for twist knots}, J. Knot Theory Ramifications \textbf{18} (2009), no. 3, 303--341.

\bibitem[HS]{HS} J. Hoste and P. Shanahan, {\em A formula for the A-polynomial of twist knots}, J. Knot Theory Ramifications \textbf{13} (2004), no. 2, 193--209.

\bibitem[Jo]{Jo} D. Johnson, {\em A geometric form of Casson invariant and its connection to
Reidemeister torsion}, unpublished lecture notes.

\bibitem[Ki1]{Ki2015} T. Kitano, {\em Reidemeister torsion of a 3-manifold obtained by a Dehn-surgery along the figure-eight knot}, arXiv:1506.00712.

\bibitem[Ki2]{Ki1994-fibered} T. Kitano, {\em Reidemeister torsion of Seifert fibered spaces for $SL_2(\BC)$-representations}, Tokyo J. Math. \textbf{17} (1994), no. 1, 59--75.

\bibitem[Ki3]{Ki1994-8} T. Kitano, {\em Reidemeister torsion of the figure-eight knot exterior for $SL_2(\BC)$-representations}, Osaka J. Math. \textbf{31} (1994), no. 3, 523--532.

\bibitem[Mi1]{Mi61} J. Milnor, {\em Two complexes which are homeomorphic but combinatorially distinct}, Ann. of Math. \textbf{74} (1961), 575--590.

\bibitem[Mi2]{Mi62} J. Milnor, {\em A duality theorem for Reidemeister torsion}, Ann. of Math. \textbf{76} (1962), 137--147.

\bibitem[Mi3]{Mi66} J. Milnor, {\em Whitehead torsion}, Bull. Amer. Math. Soc. \textbf{72} (1966), 348--426.

\bibitem[Mo]{Mo} T. Morifuji, {\em Twisted Alexander polynomials of twist knots for nonabelian representations}, Bull. Sci. Math. \textbf{132} (2008), no. 5, 439--453.



\bibitem[Ri]{Ri} R. Riley, {\em Nonabelian representations of 2-bridge knot groups}, Quart. J. Math. Oxford Ser. (2) \textbf{35} (1984), 191--208.

\bibitem[Tu]{Tu} V. Turaev, {\em Introduction to combinatorial torsions}, Lectures in Mathematics, Birkhauser, 2001.
\end{thebibliography}
\end{document}